\documentclass[11pt]{amsart}

\usepackage[T1]{fontenc}
\usepackage[utf8]{inputenc}
\usepackage{lmodern}
\usepackage{amssymb,amsmath,amsfonts,amsthm,mathtools}
\usepackage{microtype}
\usepackage[margin=1in]{geometry}
\usepackage{hyperref}
\usepackage[nameinlink,capitalise]{cleveref}
\usepackage{soul}
\usepackage{comment}

\newtheorem{theorem}{Theorem}[section]
\newtheorem{proposition}[theorem]{Proposition}

\newtheorem{corollary}[theorem]{Corollary}
\theoremstyle{definition}

\theoremstyle{remark}
\newtheorem{remark}[theorem]{Remark}

\newcommand{\pa}[1]{\left(#1 \right)} 
\newcommand{\R}{\mathbb{R}}
\DeclareMathOperator{\area}{area}
\DeclareMathOperator{\vol}{vol}
\DeclareMathOperator{\Proj}{Proj}
\newcommand{\x}{\mathbf{x}}
\newcommand{\lamD}{\lambda^{D}}

\title[]{
A Proof of the Eigenvalue Ratio Bound for Embedded Surfaces}
\author[Gloria-Picazzo]{Ricardo Gloria-Picazzo}
\address{Department of Mathematics, University of Houston, Houston, Texas 77004, U.S.}
\email{ragloriapicazzo@uh.edu}

\author[Wu]{Yingying Wu}
\address{Department of Mathematics, University of Houston, Houston, Texas 77004, U.S.}
\email{ywu68@uh.edu}

\author[Yau]{Shing-Tung Yau}
\address{Yau Mathematical Sciences Center, Tsinghua University, Beijing 100084, China; Beijing Institute of Mathematical Sciences and Applications (BIMSA), Beijing 101408, China.}
\email{styau@tsinghua.edu.cn}

\date{\today}

\begin{document}

\begin{abstract} 
   We explain how the spectrum of a closed embedded surface $\Sigma \subset \mathbb{R}^3$ relates to the Dirichlet spectrum of the bounded domain $\Omega \subset \mathbb{R}^3$ with $\partial \Omega = \Sigma$. We prove that there exists a positive constant $K_g$, depending only on the genus $g$ of $\Sigma$, such that $\lambda_k^D(\Omega)^{3/2}/(\lambda_k(\Sigma)\sqrt{\lambda_1(\Sigma)}) \ge K_g$, where $\lambda_k(\Sigma)$ denotes the $k$-th nonzero eigenvalue of the Laplace-Beltrami operator on $\Sigma$ and $\lambda_k^D(\Omega)$ denotes the $k$-th eigenvalue of the Laplacian on $\Omega$ with Dirichlet boundary conditions. Moreover, we explicitly obtain the dependence of $K_g$ on the genus, showing that $K_g \propto (g+1)^{-1}$, and we determine the optimal constant $K_0$ for $k=1$ in the genus-zero case. A generalized version of this result in arbitrary dimension is also provided for domains whose boundaries have nonnegative Ricci curvature.
\end{abstract}

\maketitle

\section{Introduction}

In \textit{Review of Geometry and Analysis} \cite[p.~237]{yau2000review}, the third author pointed out that if $\Sigma$ is an embedded surface in $\R^3$ that bounds a domain $\Omega$, then the volume of $\Omega$ is bounded above by $\frac{\sqrt{2}}{3}\area(\Sigma)\lambda_1(\Sigma)^{-\frac{1}{2}}$, 
where $\lambda_k(\Sigma)$ denotes the $k$-th non-zero eigenvalue of $\Sigma$. Then, it is also noted that from such bound and the work of Korevaar \cite{korevaar1993upper}, it follows that $\frac{\lambda_k^D(\Omega)^{3/2}}{\lambda_k(\Sigma)\sqrt{\lambda_1(\Sigma)}} \geq K_g > 0$, where $\lambda_k^D(\Omega)$ is the $k$-th Dirichlet eigenvalue of $\Omega$, and $K_g$ is a constant depending only on the genus of $\Sigma$.

This statement has been referred to as \textit{Yau's conjecture} in previous literature \cite{jumonji2008eigenvalue, wu2022}, first appearing in a paper by Jumonji and Urakawa in 2008, where they examined this inequality numerically with the surface of a cube, unit sphere, embedded torus, and an embedded surface of genus two \cite{jumonji2008eigenvalue,jumonji2008visualization}. They proposed a numerical conjecture suggesting that the unit sphere $\mathbb{S}^2$ attains the best constant $K_g$ among all the embedded surfaces. More recently, Wu--Wu--Yau \cite{wu2022} also studied this problem in the context of numerically computing eigenvalues on point clouds, comparing their results with those obtained by Jumonji and Urakawa. 
This motivated us to supply a complete proof of the eigenvalue ratio bound for embedded surfaces in $\R^3$.

\begin{theorem}\label{thm:main_res}
    If \(\Sigma\subset\R^{3}\) is an embedded closed connected surface of genus $g$ equipped with the induced Euclidean metric and let \(\Omega\subset\R^{3}\) be the bounded domain enclosed by $\Sigma$, then
    \[
        \frac{\lamD_k(\Omega)^{3/2}}{\lambda_k(\Sigma)\sqrt{\lambda_1(\Sigma)}}  \geq \frac{3^{5/2}\;C_3^{3/2}}{5^{3/2}\;\sqrt{2}\;C} \cdot \frac{1}{(g+1)}
        \ =:\ K_g > 0,\qquad k=1,2,3,\dots.
    \]
    where $C$ is an absolute positive constant and \(C_3^{3/2}=6\pi^{2}\). 
\end{theorem}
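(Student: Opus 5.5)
Following the outline in \cite{yau2000review}, the argument combines three ingredients. The first is a lower bound for $\lamD_k(\Omega)$ in terms of $\vol(\Omega)$, obtained from the Li--Yau lower bound for Dirichlet eigenvalues. The second is an upper bound for $\lambda_k(\Sigma)$ in terms of $\area(\Sigma)$ and the genus, obtained from Korevaar's theorem. The third is the volume bound $\vol(\Omega)\le \frac{\sqrt2}{3}\area(\Sigma)\lambda_1(\Sigma)^{-1/2}$, which ties the two together.

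\textbf{Step 1 (Dirichlet side).} By Li--Yau, for every bounded domain in $\R^3$ we have
\[
\lamD_k(\Omega)\ \ge\ \tfrac{3}{5}\,C_3\,\Big(\tfrac{k}{\vol(\Omega)}\Big)^{2/3}, \qquad C_3=(6\pi^2)^{2/3}.
\]
Hence
\[
\lamD_k(\Omega)^{3/2}\ \ge\ \big(\tfrac35\big)^{3/2} C_3^{3/2}\, \frac{k}{\vol(\Omega)} .
\]

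\textbf{Step 2 (surface side).} Korevaar's theorem \cite{korevaar1993upper}, in its Hassannezhad/Grigor'yan--Netrusov--Yau form for surfaces, gives
\[
\lambda_k(\Sigma)\ \le\ C\,\frac{(g+1)\,k}{\area(\Sigma)}
\]
with $C$ an absolute constant.

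\textbf{Step 3 (volume bound).} I prove $\vol(\Omega)\le \frac{\sqrt2}{3}\area(\Sigma)\lambda_1(\Sigma)^{-1/2}$ directly. Translate $\Sigma$ so that the coordinate functions $x_i|_\Sigma$ have mean zero; this is allowed because volume, area and the spectra are translation invariant. Each $x_i$ is then orthogonal to constants on $\Sigma$, so the Rayleigh quotient gives
\[
\lambda_1(\Sigma)\int_\Sigma |\x|^2 \ \le\ \sum_i\int_\Sigma|\nabla_\Sigma x_i|^2 \;=\; 2\,\area(\Sigma),
\]
since $\sum_i|\nabla_\Sigma x_i|^2=2$ on a surface. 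By the divergence theorem,
\[
3\vol(\Omega)=\int_\Sigma \x\cdot\nu \ \le\ \int_\Sigma|\x| \ \le\ \area(\Sigma)^{1/2}\Big(\int_\Sigma|\x|^2\Big)^{1/2} \ \le\ \frac{\sqrt2\,\area(\Sigma)}{\sqrt{\lambda_1(\Sigma)}} .
\]

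\textbf{Combination.} Steps 1 and 3 give
\[
\lamD_k(\Omega)^{3/2}\ \ge\ \big(\tfrac35\big)^{3/2}C_3^{3/2}\,\frac{3k\sqrt{\lambda_1(\Sigma)}}{\sqrt2\,\area(\Sigma)} .
\]
Dividing by $\lambda_k(\Sigma)\sqrt{\lambda_1(\Sigma)}$, the factor $\sqrt{\lambda_1(\Sigma)}$ cancels. Step 2 then gives $k/(\area(\Sigma)\lambda_k(\Sigma))\ge 1/(C(g+1))$, and the constant collected is
\[
\frac{3^{5/2}C_3^{3/2}}{5^{3/2}\sqrt2\,C}\cdot\frac1{g+1},
\]
which is exactly $K_g$.

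The main obstacle is Step 2. One has to make sure that Korevaar's bound holds for the induced metric with a constant linear in the genus and uniform in $k$, including small $k$. I would cite the refined version that gives $C(g+1)k/\area$ explicitly. Everything else is routine bookkeeping of constants.
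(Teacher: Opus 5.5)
Your proposal is correct and matches the paper's proof: the paper multiplies the same three inequalities (the Li--Yau pointwise bound, Korevaar's bound, and Reilly's volume bound) and obtains the same constant $K_g$; its appendix proves the volume bound by the same center-of-mass, Rayleigh-quotient and divergence-theorem argument as your Step 3. The concern you raise in Step 2 is settled in the paper by noting that Korevaar's bound, stated with $k+1$ under the indexing $\lambda_0=0$, gives your form once $C$ is replaced by $2C$, since $k+1\le 2k$ for $k\ge 1$.
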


This result advances a problem that has stood for more than two decades. It explains how the spectrum of $\Sigma$ relates to the spectrum of $\Omega$ and provides a conceptual bridge between extrinsic (volumetric/Dirichlet) and intrinsic (surface/Laplace--Beltrami) data with topological information. 

The proof of \cref{thm:main_res} relies on three spectral bounds: Reilly's estimate \cite{reilly1977first} bounding the volume of $\Omega$ from above in terms of $\lambda_1(\Sigma)$ and $\area(\Sigma)$, Li--Yau \cite{li1983schrodinger} lower bounds for Dirichlet eigenvalues on domains in $\R^{n+1}$ and Korevaar's upper bounds \cite{korevaar1993upper} for eigenvalues on Riemannian manifolds. Korevaar's upper bounds are given in terms of the genus $g$ and a universal constant $C$, making the dependence on genus explicit and obtaining $K_g\propto (g+1)^{-1}$. For the first eigenvalue, in the genus-zero case, we determine the optimal constant $K_g$ with the sphere/ball pair as the unique extremizer. 

In the last section of this manuscript, we also comment on the possibilities and limitations of a generalized version of \cref{thm:main_res} to any dimension. The following result for domains whose boundary has nonnegative Ricci curvature is proved, and we discuss the nonnegative Ricci assumption.

\begin{theorem}\label{thm:extension} 
    Consider a bounded domain $\Omega\subset\R^{n+1}$ with $\mathcal{C}^2$ connected boundary $\partial\Omega=M$ such that $M$ has nonnegative Ricci curvature with respect to the induced Euclidean metric from $\R^{n+1}$. Then 
    \[
        \frac{\lambda_k^D(\Omega)^{\frac{n+1}{2}}}{\lambda_k(M)^{\frac{n}{2}}\sqrt{\lambda_1(M)}} \geq \frac{(n+1)^{\frac{n+3}{2}}}{\sqrt{n}\, C(n)^{\frac{n}{2}}}\cdot \pa{\frac{C_{n+1}}{n+3}}^{\frac{n+1}{2}} =: K(n) > 0.
    \]
where $C(n)$ is a positive constant depending only on $n$ and $C_{n+1}=(2\pi)^2\,\omega_{n+1}^{-\frac{2}{n+1}}$ with $\omega_{n+1}$ being the volume of the $(n+1)$-dimensional ball in $\R^{n+1}$.
\end{theorem}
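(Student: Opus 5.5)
The plan is to chain three inequalities, each controlling one of the quantities $\lambda_k^D(\Omega)$, $\lambda_k(M)$ and $\lambda_1(M)$, so that the volume of $\Omega$ and the area of $M$ cancel. This is the same strategy as for \cref{thm:main_res}. The only change is that Korevaar's genus-dependent bound is replaced by an upper bound that uses $\mathrm{Ric}\ge 0$. Throughout, write $|\Omega|=\vol(\Omega)$ and $|M|=\vol_n(M)$.

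\emph{Step 1 (Reilly-type volume bound).} I would first prove the $(n+1)$-dimensional version of the estimate quoted in the introduction:
\[
|\Omega| \le \frac{\sqrt{n}}{n+1}\,|M|\,\lambda_1(M)^{-1/2}.
\]
For $n=2$ this gives the constant $\sqrt{2}/3$. I would follow Reilly's argument. Translate so that $\int_M x_i=0$, so each coordinate function restricted to $M$ is admissible in the Rayleigh quotient. Then $\lambda_1(M)\int_M |x|^2 \le \int_M |\nabla^M x|^2 = n|M|$. Next combine this with
\[
(n+1)|\Omega| = \int_\Omega \operatorname{div} x = \int_M \langle x,\nu\rangle \le |M|^{1/2}\Big(\int_M|x|^2\Big)^{1/2}.
\]
Together these give the bound. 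This uses only $\mathcal C^2$ regularity of $M$ and the divergence theorem.

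\emph{Step 2 (Li--Yau).} For the Dirichlet eigenvalues of $\Omega\subset\R^{n+1}$, I would invoke the Li--Yau lower bound
\[
\lambda_k^D(\Omega)\ge \frac{n+1}{n+3}\,C_{n+1}\Big(\frac{k}{|\Omega|}\Big)^{\frac{2}{n+1}}.
\]
Raising it to the power $\frac{n+1}{2}$ gives
\[
\lambda_k^D(\Omega)^{\frac{n+1}{2}}\ge \Big(\frac{C_{n+1}}{n+3}\Big)^{\frac{n+1}{2}}(n+1)^{\frac{n+1}{2}}\,\frac{k}{|\Omega|}.
\]
Inserting Step 1 then yields
\[
\lambda_k^D(\Omega)^{\frac{n+1}{2}}\ge (n+1)^{\frac{n+3}{2}}\Big(\frac{C_{n+1}}{n+3}\Big)^{\frac{n+1}{2}}\frac{k\,\sqrt{\lambda_1(M)}}{\sqrt n\,|M|}.
\]

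\emph{Step 3 (upper bound for $\lambda_k(M)$).} Since $M$ is a closed $n$-manifold with $\mathrm{Ric}\ge0$, I would use the Buser/Korevaar-type estimate
\[
\lambda_k(M)\le C(n)\Big(\frac{k}{|M|}\Big)^{2/n}.
\]
This can be deduced from Korevaar's metric-measure construction, or from Grigor'yan--Netrusov--Yau, because the doubling condition holds with a constant depending only on $n$ by Bishop--Gromov. Raising it to the power $n/2$ gives
\[
\lambda_k(M)^{n/2}\le C(n)^{n/2}\,\frac{k}{|M|}.
\]
Dividing the Step 2 bound by $\lambda_k(M)^{n/2}\sqrt{\lambda_1(M)}$, the factors $k$, $|M|$ and $\sqrt{\lambda_1(M)}$ all cancel. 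What remains is exactly $K(n)$.

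\emph{Main obstacle.} The delicate point is Step 3. There we must make sure that the upper bound is of the \emph{Weyl-type} form $k^{2/n}/|M|^{2/n}$, with a constant depending only on $n$ and no diameter or extrinsic term. The condition $\mathrm{Ric}\ge0$ is needed precisely so that Korevaar's covering/annulus-decomposition argument applies with a dimensional constant. I would check that the version being cited holds for all $k\ge1$, including small $k$, where normally one handles $\lambda_1$ separately. The remaining steps are routine arithmetic on exponents.
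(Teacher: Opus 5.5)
Your proposal is correct and is essentially the paper's proof. The paper likewise multiplies three bounds:
\begin{itemize}
\item Reilly's volume bound (Proposition~\ref{prop:bound}, proved in the appendix by the same coordinate-function and divergence-theorem argument you sketch);
\item the Li--Yau pointwise Dirichlet lower bound in $\R^{n+1}$;
\item the $\mathrm{Ric}\ge 0$ Weyl-type estimate $\lambda_k(M)\le C(n)(k/\area(M))^{2/n}$, which the paper cites as Li--Yau's Theorem 17 / Korevaar's Theorem 0.3.
\end{itemize}
Your exponent bookkeeping reproduces $K(n)$ exactly. The small-$k$ concern you raise is handled in the paper by an index-shift remark, which replaces $C(n)$ by $2^{2/n}C(n)$.
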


\subsection{Eigenvalues on Riemannian Manifolds}

Throughout this work, we consider a closed \(\mathcal{C}^{2}\) embedded surface \(\Sigma \subset \R^{3}\) that bounds a domain \(\Omega \subset \R^{3}\).
The eigenvalues of the Laplace--Beltrami operator $\Delta$ on $\Sigma$ with respect to the induced Euclidean metric will be denoted by
\begin{equation}\label{eqn:index}
0=\lambda_0(\Sigma) < \lambda_1(\Sigma) \leq \lambda_2(\Sigma) \leq \cdots
\end{equation}
repeated according to multiplicity. We shall only consider connected surfaces, so the multiplicity of $\lambda_0(\Sigma)=0$ is exactly one. For the Laplacian on $\Omega\subset \R^3$, we assume Dirichlet boundary conditions 
\begin{align*}
    \Delta f &= -\lambda_k^Df \quad \text{in } \Omega,\\
    f_{|\partial \Omega}&\equiv 0.
\end{align*}
As Dirichlet eigenvalues are always positive, we denote
\[
    0<\lambda_1^D(\Omega)\leq \lambda_2^D(\Omega) \leq \lambda_3^D(\Omega) \leq \cdots
\]
repeated according to multiplicity. With this notation, $\lambda_k$ denotes the $k$-th nonzero eigenvalue for $k = 1, 2, \ldots$ in both closed manifolds and manifolds with boundary with Dirichlet conditions. \\

\noindent\textbf{Acknowledgments.} 
Y.~Wu is grateful 
to Clifford Taubes for extensive discussions and generous guidance on approaches to the problem and on possible shapes to address as starting points. Y.~Wu also thanks Song Sun, Freid Tong, and Jialong Deng for helpful discussions. Y.~Wu and R.~Gloria-Picazzo are grateful to Yinbang Lin, Mehrdad Kalantar, Simone Cecchini, Christos Mantoulidis, Etienne Vouga, and Héctor Chang-Lara for helpful advice and discussions.

\section{Spectral bounds for the Laplacian}\label{sec:spectral_bounds}

To prove the eigenvalue ratio for embedded surfaces in $\R^3$, we combine three classical estimates: (i) Reilly's upper bound for $\vol(\Omega)$ in terms of $\area(\Sigma)$ and $\lambda_1(\Sigma)$ \cite{reilly1977first}, (ii) Korevaar's upper bounds for $\lambda_k(\Sigma)$ in terms of $k$, $\area(\Sigma)$, the genus, and a universal constant $C$ \cite{korevaar1993upper}, and (iii) the lower bound Berezin--Li--Yau lower bound for Dirichlet eigenvalues $\lambda^D_k(\Omega)$ \cite{berezin1972covariant, li1983schrodinger}. In this section, we briefly review these results.

\subsection{Upper bound for the volume of $\Omega$ in terms of $\lambda_1(\Sigma)$ and $\area(\Sigma)$}

The first assertion is that the volume of $\Omega$ is bounded above by $\frac{\sqrt{2}}{3}\area(\Sigma)\lambda_1(\Sigma)^{-1/2}$
 \cite[Page 237]{yau2000review}. This is indeed the case $n=2$ of an existing result for $n$-dimensional manifolds of codimension one proved by Reilly \cite{reilly1977first} in 1977. Moreover, Reilly proved that equality holds if and only if $\Omega$ is a ball. We state this result as follows:

\begin{proposition}[{\cite[Corollary 1]{reilly1977first}}]\label{prop:bound}
    Let $\Sigma$ be an embedded $n$-dimensional closed manifold in $\R^{n+1}$, with the induced Euclidean metric, bounding a domain $\Omega$. Then
    \[
        \vol(\Omega) \leq \frac{\sqrt{n}}{n+1} \, \area(\Sigma) \, \lambda_1(\Sigma)^{-\frac{1}{2}}.
    \]
    The equality holds if and only if $\Omega$ is the $(n+1)$-dimensional ball.
\end{proposition}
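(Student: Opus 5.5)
Reilly's bound follows from his integral formula applied to the solution of a torsion problem on $\Omega$, together with the Rayleigh quotient for $\lambda_1(\Sigma)$ tested on the coordinate functions.

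\emph{Step 1: Reilly's formula.} For a smooth function $f$ on $\overline{\Omega}$, with $u=f|_\Sigma$, $f_\nu$ the outward normal derivative, $h$ the second fundamental form and $H=\operatorname{tr} h$ the mean curvature (unnormalized), Reilly's formula in flat space reads
\[
\int_\Omega \pa{(\Delta f)^2-|\nabla^2 f|^2} = \int_\Sigma \pa{2 f_\nu \Delta_\Sigma u + H f_\nu^2 + h(\nabla_\Sigma u,\nabla_\Sigma u)}.
\]

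\emph{Step 2: the test function.} Take $f$ solving $\Delta f=1$ in $\Omega$ with $f=0$ on $\Sigma$. Then $u\equiv 0$ and the formula reduces to
\[
\int_\Omega\pa{1-|\nabla^2 f|^2}=\int_\Sigma H f_\nu^2.
\]
The Cauchy--Schwarz inequality $|\nabla^2 f|^2\ge (\Delta f)^2/(n+1)$ gives
\[
\frac{n}{n+1}\vol(\Omega)\ \ge\ \int_\Sigma H f_\nu^2 .
\]
Integrating $\Delta f=1$ gives $\int_\Sigma f_\nu=\vol(\Omega)$, so Cauchy--Schwarz on $\Sigma$ yields
\[
\vol(\Omega)^2\le \pa{\int_\Sigma H f_\nu^2}\pa{\int_\Sigma H^{-1}}
\]
wherever $H>0$. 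The mean-convex case is the easy regime. In general this route only controls $\vol(\Omega)$ through $\int_\Sigma H$-type quantities, so we still need a way to bring in $\lambda_1(\Sigma)$.

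\emph{Step 3: bringing in $\lambda_1(\Sigma)$.} Translate $\Sigma$ so that $\int_\Sigma x_i=0$. Then each $x_i$ is admissible in the Rayleigh quotient. Since $\sum_i|\nabla_\Sigma x_i|^2=n$, this gives
\[
\lambda_1(\Sigma)\int_\Sigma |\x|^2 \le n\,\area(\Sigma).
\]
On the other hand, the divergence theorem in $\Omega$ gives
\[
(n+1)\vol(\Omega)=\int_\Sigma \langle \x,\nu\rangle \le \area(\Sigma)^{1/2}\pa{\int_\Sigma |\x|^2}^{1/2}.
\]
Combining the two,
\[
(n+1)\vol(\Omega)\le \area(\Sigma)^{1/2}\pa{\frac{n\,\area(\Sigma)}{\lambda_1(\Sigma)}}^{1/2}=\sqrt{n}\,\area(\Sigma)\lambda_1(\Sigma)^{-1/2},
\]
which is exactly the claimed inequality. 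This elementary argument avoids Reilly's formula altogether, so Steps 1--2 can be dropped.

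\emph{Step 4: equality.} Equality forces two things. First, Cauchy--Schwarz must be sharp, so $\langle \x,\nu\rangle=c|\x|$ on $\Sigma$; combined with $\nu$ being a unit vector, this means $\x=c\nu$. Hence $\nu=\x/|\x|$ and $|\x|$ is constant, so $\Sigma$ is a round sphere centered at the centroid. Second, the Rayleigh step must be sharp, so the $x_i$ are first eigenfunctions; this is consistent with a round sphere.

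\emph{Main obstacle.} The main delicacy is the equality analysis, in particular turning the pointwise proportionality $\x=c\nu$ rigorously into $|\x|$ constant. This follows since $\nabla_\Sigma|\x|^2=2\x^{T}=0$ and $\Sigma$ is connected.
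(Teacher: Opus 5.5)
Your Step 3 is the paper's Appendix A argument: center at the centroid, test the coordinate functions in the Rayleigh quotient using $\sum_i|\nabla_\Sigma x_i|^2=n$, then combine with the divergence theorem for the position field and Cauchy--Schwarz; the Reilly-formula detour in Steps 1--2 is indeed unnecessary. For rigidity you get $|\mathbf{x}|$ constant from pointwise collinearity alone via $\nabla_\Sigma|\mathbf{x}|^2=2\mathbf{x}^{T}=0$ and connectedness, whereas the paper also uses that equality in the $L^2$ Cauchy--Schwarz forces $\Phi\cdot\mathbf{n}$ to be constant; either works, but you should still add the open--closed argument showing that $\Sigma\subset\mathbb{S}^n_r$ forces $\Sigma=\mathbb{S}^n_r$, and explicitly check that the ball attains equality rather than calling it ``consistent.''
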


A more direct proof of Proposition \ref{prop:bound} is provided in Appendix A. It is a computation that relies on the Divergence Theorem and the variational characterization of the first nonzero eigenvalue. 

\subsection{Berezin--Li--Yau inequality}

More than a century ago, Weyl proved \cite{weyl1912asymptotische, weyl1912abhangigkeit} the asymptotic formula $\lambda_k^D(\Omega) \sim C_n\pa{ k/ \vol(\Omega)}^{2/n}$ as $k\to \infty$. Here $\Omega\subset \R^n$ is a bounded domain and $C_n$ is the so-called \textit{Weyl constant} defined by $C_n = 4\pi^2\omega_n^{-2/n}$ where $\omega_n=\frac{\pi^{n/2}}{\Gamma(n/2+1)}$ is the volume of the unit $n$-dimensional ball in $\R^n$. 

Later on, in 1961 Pólya proved in \cite{polya1961eigenvalues} that for any \textit{plane-covering} domain $\Omega$ the inequalities 
\begin{equation}\label{eq:poyla}
    \lambda_k^N(\Omega) \leq C_n\pa{\frac{k}{\vol(\Omega)}}^{\frac{2}{n}} \leq \lambda_k^D(\Omega)
\end{equation}
hold for $k=1, 2, 3, \ldots$. Here, $\lambda_k^N(\Omega)$ is the $k$-th nonzero Neumann eigenvalue, and \textit{plane-covering} refers to those domains $\Omega$ with which the plane can be covered with infinite domains congruent to $\Omega$ without gaps or overlaps. Pólya's proof of \eqref{eq:poyla} applies to any dimension $n$. In the same paper, Pólya conjectured that the bounds in \eqref{eq:poyla} may be valid for arbitrary domains. This conjecture has remained open ever since, and bounds in \eqref{eq:poyla} are known in the literature as \textit{Pólya's conjecture}. For recent advancements on Pólya's conjecture, we refer the reader to \cite{filonov2023polya} and \cite{filonov2025polyasconjecturedirichleteigenvalues}, where the conjecture is addressed on Euclidean balls and annular domains, respectively. 

Regarding the lower bound for Dirichlet eigenvalues in \eqref{eq:poyla}, Li and the third author\cite{li1983schrodinger} proved a related result in 1983. In fact, they proved the following lower bound for the sum of the first $k$ Dirichlet eigenvalues. It was later observed that the bound for such a sum is a consequence of an earlier result by Berezin in \cite{berezin1972covariant} 

\begin{theorem}[Berezin--Li--Yau; Li--Yau {\cite{li1983schrodinger}}; Berezin {\cite{berezin1972covariant}}]\label{thm:LY-average}
    Let $\Omega\subset\mathbb{R}^n$ be a bounded domain and $\lambda_k^D(\Omega)$ the $k$-th eigenvalue of the Laplacian on $\Omega$ with Dirichlet conditions. Then for every $k\in\mathbb{N}$,
    \[
        \frac{1}{k}\sum_{i=1}^k \lambda_i^D(\Omega)\;\ge\;\frac{n}{n+2}\,C_n\,
        \pa{\frac{k}{\vol(\Omega)}}^{2/n},
        \qquad
        C_n=(2\pi)^2\,\omega_n^{-2/n}.
    \]
    where $\vol(\Omega)$ denotes the volume of $\Omega$.
\end{theorem}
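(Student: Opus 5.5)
The plan is to follow Li--Yau's Fourier-transform argument. Let $u_1,\dots,u_k$ be $L^2(\Omega)$-orthonormal Dirichlet eigenfunctions with $-\Delta u_j=\lambda_j^D(\Omega)u_j$. Extend each by zero to all of $\R^n$. Since $u_j\in H^1_0(\Omega)$, the extension lies in $H^1(\R^n)$, has the same $L^2$ norm, and satisfies $\int_{\R^n}|\nabla u_j|^2=\lambda_j^D(\Omega)$. This is why no regularity of $\partial\Omega$ is needed. With the unitary normalization $\hat u(\xi)=(2\pi)^{-n/2}\int u(x)e^{-ix\cdot\xi}\,dx$, define
\[
F(\xi)=\sum_{j=1}^k|\hat u_j(\xi)|^2 .
\]
We aim to reduce the theorem to a one-variable extremal problem for $F$.

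We first record three properties of $F$.
\begin{enumerate}
\item By Plancherel, $\int_{\R^n}F\,d\xi=k$.
\item Also by Plancherel, $\int_{\R^n}|\xi|^2F\,d\xi=\sum_{j=1}^k\int|\nabla u_j|^2=\sum_{j=1}^k\lambda_j^D(\Omega)$.
\item For each fixed $\xi$, $\hat u_j(\xi)=(2\pi)^{-n/2}\langle e^{-ix\cdot\xi},u_j\rangle_{L^2(\Omega)}$ (real $u_j$). Bessel's inequality for the orthonormal family $\{u_j\}$ applied to $e^{-ix\cdot\xi}\in L^2(\Omega)$ then gives the pointwise bound
\[
0\le F(\xi)\le (2\pi)^{-n}\int_\Omega|e^{-ix\cdot\xi}|^2\,dx=(2\pi)^{-n}\vol(\Omega)=:M .
\]
\end{enumerate}

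Next comes the bathtub (rearrangement) step. Among measurable $F$ with $0\le F\le M$ and $\int F=k$, the quantity $\int|\xi|^2F$ is minimized by $F^*=M\,\chi_{B_R}$, where $M\omega_nR^n=k$. To prove this, compare $F$ with $F^*$: the difference $F-F^*$ is $\le 0$ on $B_R$ and $\ge 0$ outside, and it has total integral zero. Hence
\[
\int(|\xi|^2-R^2)(F-F^*)\,d\xi\ge 0,
\]
so $\int|\xi|^2F\ge\int|\xi|^2F^*$. Computing in polar coordinates,
\[
\int|\xi|^2F^*=Mn\omega_n\int_0^R r^{n+1}\,dr=\frac{n}{n+2}M\omega_nR^{n+2}=\frac{n}{n+2}\,k\,R^2 .
\]
Finally, $R^2=\bigl(k/(M\omega_n)\bigr)^{2/n}=(2\pi)^2\omega_n^{-2/n}\bigl(k/\vol(\Omega)\bigr)^{2/n}=C_n\bigl(k/\vol(\Omega)\bigr)^{2/n}$. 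Combining with property (2) and dividing by $k$ gives the stated inequality.

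The main points needing care are the pointwise bound (3) and the bathtub comparison. The bound (3) uses Bessel rather than completeness, so finiteness of $\vol(\Omega)$ is all that is needed. The bathtub comparison should be justified with $F$ merely integrable; if $\int|\xi|^2F$ were infinite there is nothing to prove, though here it is finite since each $u_j\in H^1$. The rest is bookkeeping of the $2\pi$ normalizations, which is exactly what produces $C_n=(2\pi)^2\omega_n^{-2/n}$.
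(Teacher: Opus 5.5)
Your proof is correct. It is the standard Li--Yau Fourier-transform argument: zero extension into $H^1(\mathbb{R}^n)$, the Bessel bound $F\le(2\pi)^{-n}\vol(\Omega)$, and the bathtub comparison with $M\chi_{B_R}$, with normalizations that correctly produce $C_n=(2\pi)^2\omega_n^{-2/n}$ and the factor $n/(n+2)$. This is exactly what the paper relies on, since it states this theorem without proof and defers to Li--Yau and Berezin.
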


A consequence of the Berezin--Li--Yau averaged inequality is a similar lower bound to Pólya's conjecture, differing by a factor of $n/(n+2)$:

\begin{corollary}[Li--Yau bound \cite{li1983schrodinger}]\label{cor:LY-pointwise}
    Let $\Omega\subset\mathbb{R}^n$ be a bounded domain and $\lambda_k^D(\Omega)$ the $k$-th eigenvalue of the Laplacian on $\Omega$ with Dirichlet conditions. Then for every $k\in\mathbb{N}$,
    \[
    \lambda_k^D(\Omega)\;\ge\;\frac{n}{n+2}\,C_n\,
    \pa{\frac{k}{\vol(\Omega)}}^{2/n}.
    \]
\end{corollary}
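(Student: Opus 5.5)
The corollary follows from \cref{thm:LY-average} by monotonicity of the eigenvalue sequence. Since the Dirichlet eigenvalues are listed in nondecreasing order,
\[
\lambda_i^D(\Omega)\le\lambda_k^D(\Omega)\qquad\text{for every } 1\le i\le k .
\]
Averaging this over $i=1,\dots,k$ gives
\[
\frac{1}{k}\sum_{i=1}^k\lambda_i^D(\Omega)\;\le\;\lambda_k^D(\Omega).
\]
So the largest of the first $k$ eigenvalues dominates their mean.

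Next, apply \cref{thm:LY-average} with the same $k$ to bound the mean from below by $\frac{n}{n+2}C_n\,(k/\vol(\Omega))^{2/n}$. Chaining this with the previous inequality yields the claimed pointwise bound for every $k\in\mathbb{N}$.

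The only point to check is that the eigenvalues are repeated according to multiplicity and indexed consistently in both statements. This holds by the conventions fixed in the introduction, so there is no real obstacle.
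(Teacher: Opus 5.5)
Your proposal is correct and is the same argument the paper gives. Monotonicity of the Dirichlet eigenvalues gives $\lambda_k^D(\Omega)\ge \frac{1}{k}\sum_{i=1}^k\lambda_i^D(\Omega)$, and \cref{thm:LY-average} then bounds this average from below by $\frac{n}{n+2}C_n\,(k/\vol(\Omega))^{2/n}$.
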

\begin{proof}
    Since $\{\lambda_i^D\}$ is non-decreasing, $\lambda_k^D\ge \frac{1}{k}\sum_{i=1}^k\lambda_i^D$. Combine with \cref{thm:LY-average}.    
\end{proof}

\subsection{Upper bounds for $\lambda_k(\Sigma)$ in terms of the genus}

In 1980 Paul C. Yang and the third author \cite{yang1980eigenvalues} proved that $\lambda_1(\Sigma)\, \area(\Sigma)\leq 8\pi(g+1)$ for any orientable Riemann surface $\Sigma$ of genus $g$. This is known as the Yang--Yau inequality.
The stronger inequality $\lambda_1(\Sigma)\,\area(\Sigma)\;\le\; 8\pi\,\left\lfloor \frac{g+3}{2}\right\rfloor$ was proved in \cite{el1983volume}. Equality is attained for $g=0$ (the round sphere) and for $g=2$ (metrics arising from degree-$2$ branched covers of $S^2$); whereas for all other genera the inequality is strict as proved in \cite{karpukhin2019yang}. For further details in this direction, we refer to \cite{karpukhin2019yang,karpukhin2021isoperimetric}.

In \cite[Problem 71]{yau1982seminar}, the third author conjectured in 1982 that one can find a universal constant $C$ so that $\lambda_k(\Sigma) k^{-1} \leq C(g+1)\area(\Sigma)^{-1}$ for any closed orientable surface of genus $g$. The conjecture was settled by Korevaar \cite{korevaar1993upper} in 1993, providing bounds for general manifolds and considering Neumann eigenvalues in the case of non-empty boundary \cite[Theorem 0.3, 0.4]{korevaar1993upper}. Then he applies these two theorems to get a third result addressing the surface case, providing a bound in terms of its genus $g$. This is achieved by using the existence of a conformal mapping $\Sigma \to \mathbb{S}^2$ of degree at most $g+1$. In 2004, Grigor'yan, Netrusov, and the third author gave an alternative proof of Korevaar's result for more general elliptic operators \cite{grigor2004eigenvalues}. We state the result as follows:

\begin{theorem}[Theorem 0.5 \cite{korevaar1993upper} - see also Theorem 5.4 in \cite{grigor2004eigenvalues}]\label{thm:korevaar}
    Let $(\Sigma, \mathfrak{g})$ be a closed orientable Riemannian surface of genus $g$. Then, the $k$-th nonzero eigenvalue $\lambda_k(\Sigma)$ of the Laplace--Beltrami operator with respect to $\mathfrak{g}$ is bounded above by 
    \begin{equation}\label{korevaar_ineq}
        \lambda_k(\Sigma) \leq C\frac{(g+1)k}{\area(\Sigma)}.
    \end{equation}  
    where $C$ is a universal constant. 
\end{theorem}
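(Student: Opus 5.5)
The plan follows the Korevaar / Grigor'yan--Netrusov--Yau strategy: construct $k+1$ test functions on $\Sigma$ with pairwise disjoint supports, each with Rayleigh quotient at most $C(g+1)k/\area(\Sigma)$, and then apply min--max. Any function in the span of $k+1$ such functions has Rayleigh quotient bounded by the largest individual one, since the numerators and denominators simply add. Min--max over $(k+1)$-dimensional subspaces then gives $\lambda_k(\Sigma)\le C(g+1)k/\area(\Sigma)$.

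\textbf{Step 1: conformal map to the sphere.} By Riemann--Roch (or the Brill--Noether count used by Yang--Yau and El Soufi--Ilias), the Riemann surface $(\Sigma,[\mathfrak g])$ carries a nonconstant meromorphic function $u:\Sigma\to\mathbb S^2$ of degree $d\le g+1$. In dimension two the Dirichlet integral is conformally invariant. Hence for any Lipschitz $\varphi$ on $\mathbb S^2$,
\[
\int_\Sigma|\nabla(\varphi\circ u)|^2\,dA_{\mathfrak g}\le d\int_{\mathbb S^2}|\nabla\varphi|^2\,dA_{\mathbb S^2},
\]
because away from the finitely many branch points $u$ is a local conformal diffeomorphism covering each point at most $d$ times. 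Let $\mu=u_*(dA_{\mathfrak g})$, a finite measure on $\mathbb S^2$ of total mass $\area(\Sigma)$. It has no atoms, since fibres of $u$ are finite.

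\textbf{Step 2: disjoint annuli carrying mass.} Equip $\mathbb S^2$ with the chordal metric. This is a metric space in which every ball of radius $2r$ is covered by a bounded number $N$ of balls of radius $r$, independently of $r$. The key step, and the one I expect to be the main obstacle, is a decomposition lemma of Korevaar type. It asserts that for any nonatomic finite measure $\mu$ on such a space, and any $k$, there are $k+1$ sets $A_0,\dots,A_k$ with the following two properties.
\begin{itemize}
\item Each $A_i$ is either a ball $B(a_i,r_i)$ or an annulus $B(a_i,R_i)\setminus B(a_i,r_i)$.
\item Each satisfies $\mu(A_i)\ge c\,\mu(\mathbb S^2)/k$, and the enlarged sets $2A_i$ (radii doubled outward, halved inward) are pairwise disjoint.
\end{itemize}
Here $c$ depends only on $N$. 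I would attempt this by the GNY scheme. If some ball of small radius carries a large fraction of the mass, one works with annuli centred there, chosen across dyadic scales, using nonatomicity to shrink the inner radius. Otherwise a Vitali-type greedy selection of radius-$r$ balls, with $r$ chosen so that each ball carries mass comparable to $\mu(\mathbb S^2)/k$, yields enough well-separated pieces; the doubling constant $N$ controls how many are lost to overlaps.

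\textbf{Step 3: test functions.} For each $A_i$, let $\varphi_i$ equal $1$ on $A_i$, vanish outside $2A_i$, and interpolate linearly in the distance on the collars. Rescaling shows that the Dirichlet energy of a cutoff between radii $r$ and $2r$ on $\mathbb S^2$ is bounded by an absolute constant, since the energy is scale invariant in dimension two. Set $f_i=\varphi_i\circ u$. The $f_i$ have disjoint supports, and by Step 1,
\[
\frac{\int_\Sigma|\nabla f_i|^2}{\int_\Sigma f_i^2}\le\frac{d\,C_0}{\mu(A_i)}\le\frac{C(g+1)k}{\area(\Sigma)}.
\]
Min--max as described at the start then yields \eqref{korevaar_ineq}. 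All the delicacy is in Step 2, where the constants must be uniform in $\mu$, $k$ and the scales involved. The conformal-invariance and min--max steps are routine.
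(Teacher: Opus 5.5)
Your proposal is correct and follows the same argument the paper relies on. The paper gives no proof of its own; it cites Korevaar and Grigor'yan--Netrusov--Yau, whose proof is exactly this: a branched conformal cover $\Sigma\to\mathbb{S}^2$ of degree at most $g+1$, an annulus decomposition of the pushed-forward measure $\mu$, cutoff test functions of bounded energy, and min--max. Using $k+1$ annuli of mass at least $c\,\mu(\mathbb{S}^2)/(k+1)$ costs only a factor $2$ in $C$, which matches the paper's index remark.

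Your Step 2 is precisely the GNY/Korevaar decomposition theorem and should be cited as such. The single-radius greedy selection in your sketch does not work on its own, because ball masses at a fixed radius can vary arbitrarily across $\mathbb{S}^2$; that is why their proof has to run over many scales.
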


\begin{remark}\label{rmk:index2}
    Under our index notation $0=\lambda_0(\Sigma)<\lambda_1(\Sigma)\leq \cdots$, Korevaar's bound in \cite{korevaar1993upper} reads $\lambda_k(\Sigma) \leq C\frac{(g+1)(k+1)}{\area(\Sigma)}$. However, we can replace the constant $C$ by $2C$ to obtain \eqref{korevaar_ineq}.
\end{remark}

In the case of closed non-orientable surfaces, bounds of $\lambda_k(\Sigma)$ in terms of the genus $\gamma=1-\chi(\Sigma)$ have also been studied. For further details, we refer to \cite{karpukhin2016upper, Kokarev2020}.

\section{Proof on the eigenvalue ratio for embedded surfaces}

We now show how to deduce the bound stated by the third author \cite{yau2000review} using the estimates reviewed in \cref{sec:spectral_bounds}. We present the proof of the main result of this work:

\begin{proof}[Proof of \cref{thm:main_res}]
    From Proposition \ref{prop:bound}, with $n=2$ we have
    \begin{equation}\label{eq:lambda1-vs-VA}
        \frac{1}{\sqrt{\lambda_1(\Sigma)}} \;\ge\; \frac{3}{\sqrt{2}}\;\frac{\vol(\Omega)}{\area(\Sigma)}.
    \end{equation}
    From Corollary~\ref{cor:LY-pointwise} applied to bounded domains $\Omega$ in $\R^3$,
    \begin{equation}\label{eq:LY-3D}
        \lambda_k^D(\Omega)^{3/2} \;\ge\; \pa{\frac{3}{5}\,C_3}^{3/2}\,\frac{k}{\vol(\Omega)}
    \end{equation}
    where $C_3=(6\pi^2)^{2/3}$. For the eigenvalues of the Laplace--Beltrami operator on surfaces, Korevaar's upper bound (\cref{thm:korevaar}) implies 
    \begin{equation}\label{eq:Korevaar}
        \frac{1}{\lambda_k(\Sigma)} \;\ge\; \frac{\area(\Sigma)}{C\,(g+1)\,k}
    \end{equation}
    where $C$ is an absolute positive constant.
    
    Multiplying \eqref{eq:lambda1-vs-VA}, \eqref{eq:LY-3D}, and \eqref{eq:Korevaar} yields, for every $k\ge1$,
    \[
        \frac{\lambda_k^D(\Omega)^{3/2}}{\lambda_k(\Sigma)\,\sqrt{\lambda_1(\Sigma)}}
        \;\ge\;
        \frac{3^{5/2}\;C_3^{3/2}}{5^{3/2}\;\sqrt{2}\;C} \cdot \frac{1}{(g+1)}
        \;=\; K_g.
    \]
    This proves \cref{thm:main_res} and gives the relation $K_g\propto (g+1)^{-1}$.
\end{proof}

Note that the only unknown constant here is $C$, coming from \cref{thm:korevaar}, so estimating $C$ will give us the value of $K_g$. Alternatively, estimating $K_0$ will allow us to estimate $C$ and any $K_g$. Nevertheless, we point out that, while the bound from Proposition \ref{prop:bound} is sharp, the sharpness of the Korevaar bound is unknown, since it involves an unknown constant. Furthermore, the Li--Yau bound loses tightness when replacing the first sum of the first $k$ eigenvalues by $k\lambda_k^D(\Omega)$, that is, when passing from Theorem 2.2 to Corollary 2.3.

Euclidean isoperimetric inequality states that $P(E)\ \ge\ n\,\omega_n^{1/n}\,|E|^{(n-1)/n}$ where $P(E)$ is the (measure-theoretic) perimeter and $|E|$ the volume; for $\mathcal{C}^1$ domains, $P(\Omega)=\area(\partial\Omega)$ with $\partial \Omega=\Sigma$. Plugging $n=3$ and $\omega_3=4\pi/3$ yields
\begin{equation}\label{eqn:isop}
    \area(\Sigma)\ \ge\ 3 \pa{ \frac{4\pi}{3}}^{\!1/3}\,\vol(\Omega)^{2/3}.
\end{equation}

\begin{proposition}[Genus $0$, $k=1$: optimal constant and minimizer]\label{prop:g0-k1-opt}
    Let $\Sigma\subset\R^3$ be a closed embedded surface of genus $0$ bounding a domain $\Omega$. Then
    \[
    \frac{\lambda_1^D(\Omega)^{3/2}}{\lambda_1(\Sigma)^{3/2}}
    \ \ge\
    \frac{\pi^3}{2\sqrt{2}}.
    \]
    Equality holds if and only if $\Sigma$ is a round sphere and $\Omega$ is the corresponding ball.
\end{proposition}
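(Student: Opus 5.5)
The plan is to compare both eigenvalues with the corresponding quantities for the ball of the same volume. The target inequality is equivalent, after taking the $2/3$ power, to
\[
\frac{\lamD_1(\Omega)}{\lambda_1(\Sigma)} \ \ge\ \frac{\pi^2}{2}.
\]
The unit sphere and unit ball realize this value, since $\lambda_1(\mathbb{S}^2)=2$ and $\lamD_1(B_1)=\pi^2$. Let $V=\vol(\Omega)$, and let $R_*$ be the radius with $\frac{4\pi}{3}R_*^3=V$. I will bound the numerator from below and the denominator from above, both in terms of $R_*$ alone.

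\textbf{Lower bound for the numerator.} For the Dirichlet eigenvalue I will use the Faber--Krahn inequality rather than \cref{cor:LY-pointwise}, because the Li--Yau constant is not sharp at $k=1$. Faber--Krahn states that among domains of fixed volume, the ball minimizes $\lamD_1$, with equality only for the ball. By scaling, $\lamD_1(B_{R_*})=\pi^2R_*^{-2}$, since the first zero of $\sin r/r$ is at $r=\pi$. Hence
\[
\lamD_1(\Omega)\ \ge\ \pi^2 R_*^{-2}.
\]
This is the one ingredient not already stated in the paper, so it must be cited.

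\textbf{Upper bound for the denominator.} For the surface eigenvalue I will use the genus-zero case of the Yang--Yau inequality, which is Hersch's inequality:
\[
\lambda_1(\Sigma)\,\area(\Sigma)\ \le\ 8\pi.
\]
I then combine this with the isoperimetric inequality \eqref{eqn:isop}. Rewritten in terms of $R_*$, that inequality reads $\area(\Sigma)\ge 4\pi R_*^2$. Together these give
\[
\lambda_1(\Sigma)\ \le\ \frac{8\pi}{\area(\Sigma)}\ \le\ \frac{8\pi}{4\pi R_*^2}=\frac{2}{R_*^2}.
\]

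\textbf{Combining and equality.} Dividing the two bounds gives $\lamD_1(\Omega)/\lambda_1(\Sigma)\ge \pi^2/2$, and raising to the $3/2$ power yields the stated constant $\pi^3/(2\sqrt 2)$. For equality, every inequality in the chain must be an equality. In particular Faber--Krahn must be sharp, which forces $\Omega$ to be a ball up to a null set; this is standard for bounded $\mathcal{C}^2$ domains. Then $\Sigma$ is a round sphere. Conversely, a direct computation on the round sphere and ball gives equality, so the pair is the unique extremizer. Equality in \eqref{eqn:isop} also independently forces a ball, which gives a second route to rigidity.

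The main obstacle is making the rigidity statements precise under only $\mathcal{C}^2$ regularity. I expect no real difficulty here, since the equality cases of Faber--Krahn and of the isoperimetric inequality are classical for such domains.
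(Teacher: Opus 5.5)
Your proof is correct and takes essentially the same route as the paper. Both arguments bound $\lambda_1^D(\Omega)$ below by Faber--Krahn, bound $\lambda_1(\Sigma)$ above by the genus-zero Yang--Yau (Hersch) inequality, and link area to volume through the isoperimetric inequality. The only differences are cosmetic: you normalize by $R_*$ before raising to the $3/2$ power, and you observe that Faber--Krahn rigidity alone settles the equality case, whereas the paper also invokes the equality case of Hersch's inequality.
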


\begin{proof}
By the Rayleigh–Faber–Krahn inequality, the ball $B_r\subset \R^3$ minimizes \(\lambda^{D}_{1}\) among domains of fixed volume \(\vol(B_r)=\tfrac{4\pi r^{3}}{3}\). That is, $\lambda^{D}_{1}(\Omega) \geq \lambda^{D}_{1}(B_r) = \pi^{2} / r^{2}$. Therefore
\begin{equation}\label{eq:9}
    \lambda_1^D(\Omega)^{3/2} \geq \frac{\pi^3}{r^3} = \frac{4\pi^4}{3}\cdot\frac{1}{\vol(\Omega)}
\end{equation}
where the equality holds if and only if $\Omega=B_r$.

For genus $0$, Yang--Yau \cite{yang1980eigenvalues} gives $\lambda_1(\Sigma) \area(\Sigma)\le 8\pi$, so
\begin{equation}\label{eq:10}
    \frac{1}{\lambda_1(\Sigma)^{3/2}} \geq \frac{\area(\Sigma)^{3/2}}{(8\pi)^{3/2}},
\end{equation}
with equality if and only if $\Sigma$ is round, see \cite[Theorem 1.2]{karpukhin2021isoperimetric}.

On the other hand, by Equation \eqref{eqn:isop},
\begin{equation}\label{eq:11}
    \frac{\area(\Sigma)^{3/2}}{\vol(\Omega)}\ \geq 6\sqrt{\pi},
\end{equation}
with equality if and only if $\Sigma$ is a sphere enclosing a ball.

Multiplying \cref{eq:9} and \cref{eq:10}, along with \cref{eq:11} we obtain 
\[
\frac{\lambda_1^D(\Omega)^{3/2}}{\lambda_1(\Sigma)^{3/2}}
\ \ \ge\ \frac{4\pi^4}{3} \cdot \frac{1}{(8\pi)^{3/2}} \cdot \frac{\area(\Sigma)^{3/2}}{\vol(\Omega)}
\ \ \ge\ \frac{4\pi^4}{3}\cdot
\frac{6\sqrt{\pi}}{(8\pi)^{3/2}}
\ =\ \frac{\pi^3}{2\sqrt{2}}.
\]
All three inequalities are sharp precisely for a round sphere $\Sigma=\partial B_r$ and its ball $\Omega=B_r$, hence equality in the product holds if and only if $(\Sigma,\Omega)$ is such a sphere/ball pair.
\end{proof}

\section{On the generalization to any dimension}

We may attempt to generalize the proof in Section 3 to any bounded domain $\Omega\subset\R^{n+1}$ with boundary $M=\partial \Omega$ being an $n$-dimensional submanifold of codimension one. While both the bound in Proposition \ref{prop:bound} and the Li--Yau bound hold in any dimension $n$, an analogue of Korevaar's estimate \cref{thm:korevaar} is not available in dimension $n\geq 3$. 

In 1973, M. Berger \cite[page 138]{berger1973premieres} asked whether there exists a constant $K(M)$ depending on the $n$-dimensional manifold, such that $\lambda_1(M, \mathfrak{g}) \vol(M, \mathfrak{g})\leq K(M)$ for any Riemannian metric $\mathfrak{g}$ on $M$. As mentioned above, for surfaces this is true as we have the Yang--Yau inequality \cite{yang1980eigenvalues} in the orientable case and results by Karpukhin \cite{karpukhin2016upper} in the non-orientable case. However, the answer to Berger's question is negative for $n\geq 3$ and many examples have been constructed. For instance, in \cite{urakawa1979least} Urakawa constructs a family of Riemannian metrics $\mathfrak{g}(t)$ with $t\in (0, \infty)$ on any compact connected Lie group $M$ distinct from the $n$-torus $T^n$ such that $\vol(M, \mathfrak{g}(t))$ is constant but $\lim_{t\to \infty}\lambda_1(M, \mathfrak{g}(t))=\infty$. Furthermore,         Colbois and Dodziuk \cite{colbois1994riemannian}  have provided a proof that any compact $n$-dimensional manifold $M$ with $n\geq 3$ admits metrics $\mathfrak{g}$ of volume one with arbitrarily large $\lambda_1(M, \mathfrak{g})$. Hence, for $n$-dimensional manifolds $M$ with $n\geq 3$, the existence of a constant $C(n)$ depending only on $n$ such that $\lambda_k(M) \leq C(n)\pa{ k / \vol(M)}^{2/n}$ is not possible and the constant $C(n)$ shall depend on more geometric properties of the manifold. See also \cite[Remark 5.5]{grigor2004eigenvalues} and \cite{KOKAREV}. 

However, it is still feasible to relate the spectrum of $\Omega$ with the spectrum of $M=\partial\Omega$ by making more assumptions on the geometry of $M$. For instance, the following bound for $\lambda_k(M)$ is possible if we assume that $M$ has nonnegative Ricci curvature:

\begin{theorem}[Theorem 17 in \cite{li1980estimates}; see also Theorem 0.3 in \cite{korevaar1993upper}]\label{thm:general_bound}
    Let $(M, \mathfrak{g})$ be a closed $n$-dimensional Riemannian manifold with nonnegative Ricci curvature. Then 
    \begin{align}\label{eq:gen0}
        \lambda_k(M) \leq C(n) \pa{\frac{k}{\vol(M)}}^{\frac{2}{n}} 
    \end{align}
    where $C(n)$ is a positive constant depending only on $n$.
\end{theorem}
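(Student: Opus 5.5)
The plan is to run the standard disjoint test function argument, with volume control supplied by the Bishop--Gromov comparison theorem, which applies because $\mathrm{Ric}\ge 0$. Write $V=\vol(M)$ and let $B(x,\rho)$ denote geodesic balls in $(M,\mathfrak{g})$. Two consequences of $\mathrm{Ric}\ge0$ will be used:
\begin{itemize}
\item the absolute bound $\vol B(x,\rho)\le \omega_n\rho^n$ for all $x$ and $\rho>0$;
\item the doubling property $\vol B(x,2\rho)\le 2^n\vol B(x,\rho)$, which follows from monotonicity of $\rho\mapsto \vol B(x,\rho)/\rho^n$.
\end{itemize}
By the min--max principle, to bound $\lambda_k(M)$ from above it suffices to produce $k+1$ nonzero Lipschitz functions with pairwise disjoint supports, each having Rayleigh quotient at most the desired bound. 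Their span is then a $(k+1)$-dimensional space on which the Rayleigh quotient is controlled, since cross terms vanish.

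\emph{Step 1: packing.} Fix $r>0$, to be chosen, and take a maximal family $\{x_1,\dots,x_N\}\subset M$ with $d(x_i,x_j)\ge 4r$ for $i\ne j$. The balls $B(x_i,2r)$ are pairwise disjoint. By maximality, the balls $B(x_i,4r)$ cover $M$. Using the absolute Bishop bound,
\[
V\le \sum_{i=1}^N \vol B(x_i,4r)\le N\,\omega_n (4r)^n .
\]
We now choose $r=\bigl(V/(\omega_n4^n(k+1))\bigr)^{1/n}$, which forces $N\ge k+1$. This step does not require $4r$ to be smaller than the diameter: if $4r$ exceeds it, the covering inequality still holds and simply shows $N\ge k+1$ directly.

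\emph{Step 2: test functions.} For $i=1,\dots,k+1$ define
\[
\varphi_i(x)=\min\{1,\max\{0,\,2-d(x,x_i)/r\}\}.
\]
Each $\varphi_i$ equals $1$ on $B(x_i,r)$, is supported in $B(x_i,2r)$, and satisfies $|\nabla\varphi_i|\le 1/r$ almost everywhere. Therefore
\[
\frac{\int|\nabla\varphi_i|^2}{\int\varphi_i^2}\le \frac{\vol B(x_i,2r)}{r^2\,\vol B(x_i,r)}\le \frac{2^n}{r^2},
\]
where the last inequality is the doubling property. Min--max then yields
\[
\lambda_k(M)\le 2^n r^{-2}=2^n(4^n\omega_n)^{2/n}\Bigl(\tfrac{k+1}{V}\Bigr)^{2/n}\le 2^{n+2/n}(4^n\omega_n)^{2/n}\Bigl(\tfrac{k}{V}\Bigr)^{2/n},
\]
using $k+1\le 2k$. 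This is \eqref{eq:gen0} with an explicit $C(n)$.

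The only delicate point, and the one I would check most carefully, is Step 1. It must convert a lower bound on the \emph{number} of disjoint balls into a lower bound on the \emph{radius} $r$ in terms of $(V/k)^{1/n}$, and this is exactly where the absolute Bishop bound $\vol B\le\omega_n\rho^n$ enters. This also explains why the result fails without a curvature assumption: in the Colbois--Dodziuk and Urakawa examples, small balls can carry disproportionately large volume, so no such radius bound is available. The remaining ingredients are routine: the test functions are Lipschitz and hence in $H^1(M)$, and their disjoint supports make the span $(k+1)$-dimensional.
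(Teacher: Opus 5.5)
The paper gives no proof of this statement; it is quoted from Li--Yau and from Korevaar's Theorem 0.3. Your argument is a correct, self-contained replacement for that citation. The min--max count with $\lambda_0=0$ is right. So are the packing via Bishop's bound $\vol B(x,\rho)\le\omega_n\rho^n$, the Rayleigh-quotient estimate via Bishop--Gromov doubling, and the shift $k+1\le 2k$, which is the same adjustment as in the remark after the theorem. One wording point: if $4r$ exceeded the diameter you would have $N=1$, and the covering inequality would read $V\le V/(k+1)$, so that case cannot occur for $k\ge 1$.

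This is the classical packing route. Using Cheng's comparison $\lambda_1^D(B(x,r))\le\lambda_1^D(B^{\R^n}_r)$ instead of plateau cutoffs would only change the constant.

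\emph{What your route buys.} It is elementary and gives an explicit constant, $C(n)=2^{n+2/n}\cdot 16\,\omega_n^{2/n}$. This in turn makes $K(n)$ in \cref{thm:extension} explicit, which the paper leaves implicit.

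\emph{What Korevaar's route buys.} Generality: his theorem bounds $\lambda_k(g)$ for every metric $g$ conformal to a background metric $g_0$ with $\mathrm{Ric}(g_0)\ge 0$. There $dv_g$ can concentrate, so disjoint $g_0$-balls need not each carry $g$-mass $\gtrsim V/k$, and doubling of $dv_g$ is unavailable. Korevaar instead uses annuli adapted to the measure, together with the conformal invariance of $\int|\nabla f|^n$ and H\"older's inequality, to transfer energy bounds from $g_0$ to $g$. That conformal flexibility is what yields the genus bound of \cref{thm:korevaar} via branched conformal maps to $\mathbb{S}^2$. Your argument relies on doubling of the given metric's own volume, so it cannot reach that bound; for \cref{thm:extension}, however, it is all that is needed.
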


\begin{remark}
    As in Remark \ref{rmk:index2}, under our index notation, the constant $C(n)$ in estimates in \cite{li1980estimates, korevaar1993upper} can be replaced by $2^{2/n}C(n)$ to obtain \eqref{eq:gen0}.
\end{remark}

Thus, assuming that $M=\partial\Omega$ has nonnegative Ricci curvature, we can relate the Dirichlet spectrum of $\Omega$ with the spectrum of $M=\partial\Omega$. We prove \cref{thm:extension}, a generalized version  to any dimension, under the assumption that $M$ has nonnegative Ricci curvature:

\begin{proof}[Proof of \cref{thm:extension}]
    We denote by $\vol(\Omega)$ the volume of the compact domain $\Omega\subset\R^{n+1}$ and by $\area(M)$ the volume of the $n$-dimensional manifold $M=\partial\Omega$. From \cref{thm:general_bound}:
    \begin{equation}\label{eq:gen1}
        \frac{1}{\lambda_k(M)^{n/2}} \geq \frac{1}{C(n)^{n/2}}\pa{\frac{\area(M)}{k}}   \quad\text{for all}\quad k=1, 2, \ldots.
    \end{equation}

    From Corollary \ref{cor:LY-pointwise}, applied to $n+1$ we obtain 
    \begin{equation}\label{eq:gen2}
        \lambda_k^D(\Omega)^{\frac{n+1}{2}} \geq \pa{\frac{(n+1)C_{n+1}}{n+3}}^{\frac{n+1}{2}}\pa{\frac{k}{\vol(\Omega)}}
    \end{equation}
    From Proposition \ref{prop:bound} we get
    \begin{equation}\label{eq:gen3}
        \frac{1}{\sqrt{\lambda_1(M)}} \geq \frac{n+1}{\sqrt{n}}\cdot\frac{\vol(\Omega)}{\area(M)}.
    \end{equation}
    Multiplying inequalities \eqref{eq:gen1}-\eqref{eq:gen2}-\eqref{eq:gen3} we obtain:
    \begin{align*}
        \frac{\lambda_k^D(\Omega)^{\frac{n+1}{2}}}{\lambda_k(M)^{n/2}\sqrt{\lambda_1(M)}} 
        \ge\frac{(n+1)^{\frac{n+3}{2}}}{\sqrt{n}\, C(n)^{\frac{n}{2}}}\cdot \pa{\frac{C_{n+1}}{n+3}}^{\frac{n+1}{2}}.
    \end{align*}
\end{proof}

\appendix
\section{Proof of Proposition \ref{prop:bound}}

We regard a $n$-dimensional closed smooth manifold $\Sigma$ embedded in $\R^{n+1}$ bounding a domain $\Omega\subset \R^{n+1}$. 

\begin{proof}
    Let $p\in \Sigma\subset\R^{n+1}$ and consider a (local) parametrization $\x:U\subset \R^n\to V\subset\Sigma$ of $p=\x(0)\in \Sigma$:
    $$
        \x(u^1, \cdots, u^n)=(x^1(u^1, \cdots, u^n), \cdots, x^{n+1}(u^1, \cdots, u^n) ).
    $$
    We consider the induced Euclidean metric on $\Sigma$:
    \[
        \mathfrak{g}_{ij} = \frac{\partial \x}{\partial u^i} \cdot \frac{\partial\x}{\partial u^j} = \pa{ \frac{\partial x^1}{\partial u^i}, \cdots, \frac{\partial x^{n+1}}{\partial u^i} } \cdot \pa{ \frac{\partial x^1}{\partial u^j}, \cdots, \frac{\partial x^{n+1}}{\partial u^j} } = \sum_{k=1}^{n+1} \frac{\partial x^k}{\partial u^i}\frac{\partial x^k}{\partial u^j}.
    \]

    For any smooth function $f:\Sigma\to \R$, the $i$-th coordinate of the gradient is given by 
    \[
        (\nabla f)^i = \sum_{j=1}^n \mathfrak{g}^{ij} \frac{\partial f}{\partial u^j}. 
    \]
    Thus, we also have
    \begin{equation}\label{eq:3}
        |\nabla f|^2 = \left\langle \nabla f, \nabla f\right\rangle_p = \sum_{i=1}^n\sum_{j=1}^n \mathfrak{g}^{ij}\frac{\partial f}{\partial u^i}\frac{\partial f}{\partial u^j}.
    \end{equation}

    Now we take the projection functions 
    \begin{align*}
        \Proj_k:\Sigma\subset \R^{n+1} &\to \R ,\\
        (x^1, \ldots, x^{n+1}) &\mapsto x^k. 
    \end{align*}
    for each $k=1, \ldots, n+1 $. Observe that for each $i=1, \ldots, n$ and $k=1, \ldots, n+1$,
    \[
        \frac{\partial \Proj_k }{\partial u^i} = \frac{\partial x^k}{\partial u^i}.
    \]
    Substituting into \cref{eq:3} for $f=\Proj_k$ gives
    \[
        |\nabla \Proj_k|^2 = \sum_{i=1}^n\sum_{j=1}^n \mathfrak{g}^{ij}\frac{\partial \Proj_k}{\partial u^i}\frac{\partial \Proj_k}{\partial u^j} =  \sum_{i=1}^n\sum_{j=1}^n \mathfrak{g}^{ij}\frac{\partial x^k}{\partial u^i}\frac{\partial x^k}{\partial u^j}.
    \]
    Summing over $k$ we obtain:
    \begin{align}\nonumber
        \sum_{k=1}^{n+1} |\nabla \Proj_k|^2 
        = \sum_{k=1}^{n+1}\sum_{i=1}^n\sum_{j=1}^n \mathfrak{g}^{ij}\frac{\partial x^k}{\partial u^i}\frac{\partial x^k}{\partial u^j}
        &= \sum_{i=1}^{n}\sum_{j=1}^n \mathfrak{g}^{ij} \sum_{k=1}^{n+1} \frac{\partial x^k}{\partial u^i}\frac{\partial x^k}{\partial u^j}\\\label{eq:gradients}
        &= \sum_{i=1}^{n}\sum_{j=1}^n \mathfrak{g}^{ij} \mathfrak{g}_{ij} = \sum_{i=1}^n (\textrm{id}_{n\times n})_{ii} = n. 
    \end{align}
    
    On the other hand, applying the Divergence Theorem to the identity vector field $\Phi=\textrm{Id}_{\R^{n+1}}$, we get 
    $$
    \int_\Sigma \Phi \cdot \mathbf{n} \,dS = (n+1)\int_\Omega \, \mathrm{dvol}  = (n+1)\vol(\Omega).
    $$
     Using the Cauchy--Schwarz inequality, we get
    \[
    \Big|\int_{\Sigma} \Phi\cdot \mathbf{n}\,dS\Big|
    \le\Bigg(\int_{\Sigma} (\Phi\cdot \mathbf{n})^2\,dS\Bigg)^{\!1/2}\,\Bigg(\int_{\Sigma} 1\,dS\Bigg)^{\!1/2}=
    \Bigg(\int_{\Sigma} (\Phi\cdot \mathbf{n})^2\,dS\Bigg)^{\!1/2}\,\area(\Sigma)^{1/2}.
    \]
    Then we obtain
    \begin{align} 
        \vol(\Omega) 
        = \frac{1}{n+1}\int_\Sigma \Phi \cdot \mathbf{n}\, dS
        \leq \frac{1}{n+1}\pa{\int_\Sigma |\Phi|^2  \, dS}^{1/2}\area(\Sigma)^{1/2} \label{eq:vol}
    \end{align}
    We can always translate the surface and center it at 
    $$
    \frac{1}{\area(\Sigma)}\pa{\int_\Sigma \Proj_1\, dS, \int_\Sigma \Proj_2 \,dS, \ldots, \int_\Sigma \Proj_{n+1} \, dS}
    $$
    so in this way these integrals of functions $\Proj_k$ over $\Sigma$ are zero. Thus, without loss of generality, we may assume $\displaystyle\int_\Sigma \Proj_k\, dS = 0$ for all $k=1, \ldots, n+1$. 

    The variational characterization of the first nonzero eigenvalue
    \begin{equation}\label{eq:variational}
        \lambda_1(\Sigma) = \inf_{\substack{f\in \mathcal{H}^1(\Sigma)\setminus \{0\} \\ \int_\Sigma f\, dS = 0 }} \, \frac{\int_\Sigma |\nabla f|^2\, dS}{\int_\Sigma f^2\, dS}
    \end{equation}
    where $\mathcal{H}^1(\Sigma)$ denotes the Sobolev space of functions in $L^2(\Sigma)$, implies that
    \begin{equation*}
        \lambda_1 {\int_\Sigma f^2\, dS}\leq{\int_\Sigma |\nabla f|^2\, dS}
    \end{equation*}
    for
    $f\in \mathcal{H}^1(\Sigma)\setminus\{0\}$ and $\int_\Sigma f\, dS = 0$. Note that this accounts for all $\Proj_k$, so we have
    \begin{equation}{\label{eq:sumk}}
        \lambda_1 {\int_\Sigma \Proj_k^2\, dS}\leq{\int_\Sigma |\nabla \Proj_k|^2\, dS}, \quad\text{for all } k=1, \ldots, n+1.
    \end{equation}
    However $$\Phi(x^1, x^2, \ldots, x^{n+1}) = (x^1, x^2, \ldots, x^{n+1}) = (\Proj_1, \Proj_2, \ldots, \Proj_{n+1}),$$
    so
    $$|\Phi|^2 = \sum_{k=1}^{n+1} \Proj_k^2.
    $$
    
    Then, we add up the $n+1$ inequalities in \cref{eq:sumk} and use \cref{eq:gradients} to get
    \[
        \lambda_1\int_\Sigma  |\Phi|^2 \, dS  \leq \int_\Sigma \pa{ \sum_{k=1}^{n+1} |\nabla \Proj_k|^2 }\, dS = n\, \area(\Sigma).
    \]
    Taking square roots, we get, 
    $$
    \pa{\int_\Sigma |\Phi|^2\, dS}^{1/2}\leq \sqrt{n}\,\area(\Sigma)^{1/2}\,\lambda_1^{-1/2}
    $$
    Finally we substitute in \cref{eq:vol} and obtain 
    \[
       \vol(\Omega)  \leq  \sqrt{n}\,\area(\Sigma)^{1/2}\,\lambda_1^{-1/2}\, \frac{\area(\Sigma)^{1/2}}{n+1} = \frac{\sqrt{n}}{n+1} \, \area(\Sigma) \, \lambda_1^{-\frac{1}{2}}.
    \] 

    \emph{Optimality.}
    Let $\Sigma=\mathbb{S}_r^n$ be the $n$-sphere of radius $r$ and $\Omega=B_r^{n+1}$ the $(n+1)$-dimensional ball. Then
    \[
        \vol(\Omega)=\frac{\pi^{\frac{n+1}{2}}r^{n+1}}{\Gamma(\frac{n+1}{2}+1)} \quad\text{and}\quad \area(\Sigma) = \frac{2\pi^{\frac{n+1}{2}}r^n}{\Gamma(\frac{n+1}{2})}.
    \]
    The first positive Laplace--Beltrami eigenvalue on $\mathbb{S}_1^n$ is given by $n$, see \cite{folland1989harmonic}. Thus $\lambda_1(\mathbb{S}_r^n)=n/r^2$ and
    \begin{align*}
        \frac{\sqrt{n}}{n+1}\,\area(\Sigma)\,\lambda_1(\Sigma)^{-1/2}
        &=\frac{\sqrt{n}}{n+1}\cdot \frac{2\pi^{\frac{n+1}{2}}r^n}{\Gamma(\frac{n+1}{2})} \cdot \frac{r}{\sqrt{n}} \\
        &=\frac{\pi^{\frac{n+1}{2}}r^{n+1}}{\pa{\frac{n+1}{2}}\Gamma(\frac{n+1}{2})}  =\frac{\pi^{\frac{n+1}{2}}r^{n+1}}{\Gamma(\frac{n+1}{2}+1)}  =\vol(\Omega).
    \end{align*}
    Hence the constant $\tfrac{\sqrt{n}}{n+1}$ cannot be improved.

    \emph{Rigidity.}
    
    We used the chain
    \[
    \begin{aligned}
    (n+1)\,\vol(\Omega)
    &=\int_\Sigma \Phi \cdot \mathbf n\,dS
    \ \le\ \Big(\int_\Sigma (\Phi \cdot \mathbf n)^2\,dS\Big)^{ 1/2}\,\area(\Sigma)^{1/2} \\
    &\le\ \Big(\int_\Sigma |\Phi|^2\,dS\Big)^{ 1/2}\,\area(\Sigma)^{1/2}
    \ \le\ \sqrt{n}\,\area(\Sigma)\,\lambda_1(\Sigma)^{-1/2}.
    \end{aligned}
    \]
    
    The first inequality is Cauchy--Schwarz in $L^2(\Sigma)$, and equality requires $\Phi \cdot \mathbf{n}$ to be a constant. 
    The second inequality is the pointwise inequality $(\Phi \cdot \mathbf n)^2\le |\Phi|^2$, and equality requires $\Phi$ and $\mathbf{n}$ 
    to be collinear at each point. These two facts imply that there exists an a priori (possibly nonconstant) scalar function 
    $c:\Sigma\to(0,\infty)$ and a constant $R>0$ such that 
    \[
    \mathbf n = c\,\Phi
    \qquad\text{and}\qquad
    \Phi\cdot\mathbf n \equiv R.
    \]
    Since $|\mathbf n|\equiv 1$, necessarily $c(p)=1/|\Phi(p)|$, and therefore
    \[
        R \;=\; \Phi\cdot\mathbf n \;=\; \Phi\cdot(c\,\Phi) \;=\; c\,|\Phi|^2 \;=\; \frac{|\Phi|^2}{|\Phi|} \;=\; |\Phi|.
    \]
    Hence $|\Phi(p)|\equiv r$ on $\Sigma$, i.e.\ $|p|=r$ for all $p\in\Sigma$, so $\Sigma\subset \mathbb{S}_r^n$. Moreover 
    $\mathbf n(p)=\Phi(p)/r=p/r$ is the outward normal of $\mathbb{S}_r^n$, hence $$T_p\Sigma=\{v: \langle v, \mathbf{n}(p)\rangle = 0\} = \{v: \langle v, p\rangle = 0\} = T_p\mathbb{S}^n_r.$$ for all $p\in\Sigma$, 
    and the inclusion $\iota: \Sigma\hookrightarrow \mathbb{S}_r^n$ is a local diffeomorphism by the inverse function theorem. This means $\iota(\Sigma)$ is open in $\mathbb{S}^n_r$. Since $\Sigma$ is compact, $\iota(\Sigma)$
    is compact in the Hausdorff manifold
    $\mathbb{S}^n_r$, hence closed. As $\mathbb{S}_r^n$ is connected, 
    the image is both closed and open in $\mathbb{S}_r^n$, so $\Sigma=\mathbb{S}_r^n$.
\end{proof}

\bibliographystyle{amsplain}
\bibliography{references}

@article{yau2000review,
  title={Review of geometry and analysis},
  author={Yau, Shing-Tung},
  journal={Asian Journal of Mathematics},
  volume={4},
  pages={235--278},
  year={2000},
  publisher={International Press}
}

@article{korevaar1993upper,
  title={Upper bounds for eigenvalues of conformal metrics},
  author={Korevaar, Nicholas},
  journal={Journal of Differential Geometry},
  volume={37},
  number={1},
  pages={73--93},
  year={1993},
  publisher={Lehigh University}
}

@article{jumonji2008eigenvalue,
  title={The eigenvalue problems for the Laplacian on compact embedded surfaces and three dimensional bounded domains},
  author={Jumonji, Masaki and Urakawa, Hajime},
  journal={Interdisciplinary Information Sciences},
  volume={14},
  number={2},
  pages={191--223},
  year={2008},
  publisher={The Editorial Committee of the Interdisciplinary Information Sciences}
}

@inproceedings{jumonji2008visualization,
  title={Visualization of the eigenvalue problems of the Laplacian for embedded surfaces and its applications},
  author={Jumonji, Masaki and Urakawa, Hajime},
  booktitle={Seminaire and Congres},
  volume={19},
  pages={47--91},
  year={2008}
}

@article{KOKAREV,
title = {Variational aspects of Laplace eigenvalues on Riemannian surfaces},
journal = {Advances in Mathematics},
volume = {258},
pages = {191-239},
year = {2014},
issn = {0001-8708},
doi = {https://doi.org/10.1016/j.aim.2014.03.006},
url = {https://www.sciencedirect.com/science/article/pii/S0001870814001005},
author = {Gerasim Kokarev}
}

@article{yang1980eigenvalues,
  title={Eigenvalues of the Laplacian of compact Riemann surfaces and minimal submanifolds},
  author={Yang, Paul C and Yau, Shing-Tung},
  journal={Annali della Scuola Normale Superiore di Pisa-Classe di Scienze},
  volume={7},
  number={1},
  pages={55--63},
  year={1980}
}

@article{el1983volume,
  title={Le volume conforme et ses applications d'apr{\`e}s Li et Yau},
  author={El Soufi, Ahmad and Ilias, Sa{\i}d},
  journal={S{\'e}minaire de th{\'e}orie spectrale et g{\'e}om{\'e}trie},
  volume={2},
  pages={1--15},
  year={1983}
}

@article{karpukhin2021isoperimetric,
  title={An isoperimetric inequality for Laplace eigenvalues on the sphere},
  author={Karpukhin, Mikhail and Nadirashvili, Nikolai and Penskoi, Alexei V and Polterovich, Iosif},
  journal={Journal of Differential Geometry},
  volume={118},
  number={2},
  pages={313--333},
  year={2021},
  publisher={Lehigh University}
}

@article{karpukhin2019yang,
  title={On the Yang--Yau inequality for the first Laplace eigenvalue},
  author={Karpukhin, Mikhail},
  journal={Geometric and Functional Analysis},
  volume={29},
  number={6},
  pages={1864--1885},
  year={2019},
  publisher={Springer}
}

@article{li1983schrodinger,
  title={On the Schr{\"o}dinger equation and the eigenvalue problem},
  author={Li, Peter and Yau, Shing-Tung},
  journal={Communications in Mathematical Physics},
  volume={88},
  number={3},
  pages={309--318},
  year={1983},
  publisher={Springer}
}

@article{polya1961eigenvalues,
  title={On the eigenvalues of vibrating membranes},
  author={P{\'o}lya, George},
  journal={Proc. London Math. Soc},
  volume={11},
  number={3},
  pages={419--433},
  year={1961}
}

@book{yau1982seminar,
  title={Seminar on differential geometry},
  author={Yau, Shing-Tung},
  number={102},
  year={1982},
  publisher={Princeton University Press}
}

@misc{wu2022,
      title={Surface Eigenvalues with Lattice-Based Approximation In comparison with analytical solution}, 
      author={Yingying Wu and Tianqi Wu and Shing-Tung Yau},
      year={2022},
      eprint={2203.03603},
      archivePrefix={arXiv},
      primaryClass={math.NA},
      url={https://arxiv.org/abs/2203.03603}, 
}

@article{folland1989harmonic,
  title={Harmonic analysis of the de Rham complex on the sphere.},
  author={Folland, Gerald B},
  year={1989},
  publisher={De Gruyter}
}

@article{filonov2023polya,
  title={P{\'o}lya’s conjecture for Euclidean balls},
  author={Filonov, Nikolay and Levitin, Michael and Polterovich, Iosif and Sher, David A},
  journal={Inventiones mathematicae},
  volume={234},
  number={1},
  pages={129--169},
  year={2023},
  publisher={Springer}
}

@misc{filonov2025polyasconjecturedirichleteigenvalues,
      title={P\'{o}lya's conjecture for Dirichlet eigenvalues of annuli}, 
      author={Nikolay Filonov and Michael Levitin and Iosif Polterovich and David A. Sher},
      year={2025},
      eprint={2505.21737},
      archivePrefix={arXiv},
      primaryClass={math.SP},
      url={https://arxiv.org/abs/2505.21737}, 
}

@article{grigor2004eigenvalues,
  title={Eigenvalues of elliptic operators and geometric applications},
  author={Grigor’yan, Alexander and Netrusov, Yuri and Yau, Shing-Tung},
  journal={Surveys in differential geometry},
  volume={9},
  number={1},
  pages={147--217},
  year={2004},
  publisher={International Press Somerville, MA}
}

@article{Kokarev2020,
 ISSN = {00222518, 19435258},
 URL = {https://www.jstor.org/stable/26959881},
 author = {Gerasim Kokarev},
 journal = {Indiana University Mathematics Journal},
 number = {6},
 pages = {pp. 1975--2003},
 publisher = {Indiana University Mathematics Department},
 title = {Conformal Volume and Eigenvalue Problems},
 urldate = {2025-11-13},
 volume = {69},
 year = {2020}
}

@article{reilly1977first,
  title={On the first eigenvalue of the Laplacian for compact submanifolds of Euclidean space},
  author={Reilly, Robert C},
  journal={Commentarii Mathematici Helvetici},
  volume={52},
  number={1},
  pages={525--533},
  year={1977},
  publisher={Springer}
}

@article{karpukhin2016upper,
  title={Upper bounds for the first eigenvalue of the Laplacian on non-orientable surfaces},
  author={Karpukhin, Mikhail},
  journal={International Mathematics Research Notices},
  volume={2016},
  number={20},
  pages={6200--6209},
  year={2016},
  publisher={Oxford University Press}
}

@article{weyl1912asymptotische,
  title={Das asymptotische Verteilungsgesetz der Eigenwerte linearer partieller Differentialgleichungen (mit einer Anwendung auf die Theorie der Hohlraumstrahlung)},
  author={Weyl, Hermann},
  journal={Mathematische Annalen},
  volume={71},
  number={4},
  pages={441--479},
  year={1912},
  publisher={Springer}
}

@article{weyl1912abhangigkeit,
  title={{\"U}ber die Abh{\"a}ngigkeit der Eigenschwingungen einer Membran und deren Begrenzung.},
  author={Weyl, Hermann},
  year={1912},
  publisher={De Gruyter}
}

@article{berezin1972covariant,
  title={Covariant and contravariant symbols of operators},
  author={Berezin, Felix A},
  journal={Mathematics of the USSR-Izvestiya},
  volume={6},
  number={5},
  pages={1117},
  year={1972},
  publisher={IOP Publishing}
}

@article{berger1973premieres,
  title={Sur les premieres valeurs propres des vari{\'e}t{\'e}s riemanniennes},
  author={Berger, Marcel},
  journal={Compositio Mathematica},
  volume={26},
  number={2},
  pages={129--149},
  year={1973}
}

@article{urakawa1979least,
  title={On the least positive eigenvalue of the Laplacian for compact group manifolds},
  author={Urakawa, Hajime},
  journal={Journal of the Mathematical Society of Japan},
  volume={31},
  number={1},
  pages={209--226},
  year={1979},
  publisher={The Mathematical Society of Japan}
}

@article{colbois1994riemannian,
  title={Riemannian metrics with large $\lambda_1$},
  author={Colbois, Bruno and Dodziuk, Jozef},
  journal={Proceedings of the American Mathematical Society},
  volume={122},
  number={3},
  pages={905--906},
  year={1994}
}

@inproceedings{li1980estimates,
  title={Estimates of eigenvalues of a compact Riemannian manifold},
  author={Li, Peter and Yau, Shing-Tung},
  booktitle={Proc. Symp. Pure Math},
  volume={36},
  pages={205--239},
  year={1980}
}

\end{document}